\theoremstyle{plain}
\newtheorem{thm}{Theorem}
\newtheorem*{thm*}{Theorem}
\newtheorem{lem}[thm]{Lemma} 
\newtheorem*{cor*}{Corollary}
\newtheorem{defi}[thm]{Definition}
\newtheorem{rem}[thm]{Remark}
\newcommand {\R} {\mathbb{R}} \newcommand {\Z} {\mathbb{Z}}
\newcommand {\T} {\mathbb{T}} \newcommand {\N} {\mathbb{N}}
\newcommand {\p} {\partial}
\newcommand {\dt} {\partial_t}
\newcommand {\dist} {\text{dist}}
\begin{document}
\title[Resonance Chains]{On the Smallness Condition in Linear Inviscid Damping: Monotonicity and
  Resonance Chains}
\author{Yu Deng}
\author{Christian Zillinger}
\address{USC Dornsife, Department of Mathematics, 3629 S. Vermont Ave., CA 90089
  Los Angeles, USA}
\email{yudeng@usc.edu}
\address{BCAM - Basque Center for Applied Mathematics, Mazarredo, 14 E48009 Bilbao, Basque Country – Spain}
\email{czillinger@bcamath.org}

\begin{abstract}
  We consider the linearized Euler equations around a smooth, bilipschitz shear
  profile $U(y)$ on $\T_L \times \R$.
  We construct an explicit flow which exhibits linear inviscid damping for $L$ sufficiently small, but for which
  damping fails if $L$ is large. In particular, similar to the instability
  results for convex profiles for a shear flow being bilipschitz is not
  sufficient for linear inviscid damping to hold.
  Instead of an eigenvalue-based argument the underlying mechanism here is shown
  to be based on a new cascade of resonances
  moving to higher and higher frequencies in $y$, which is distinct from the echo chain
  mechanism in the nonlinear problem. 
\end{abstract}

\maketitle

\section{Introduction}
\label{sec:intro}

We are interested in the long-time asymptotic behavior of the linearized 2D
Euler equations near monotone shear flows $v=(U(y),0)$ in a periodic channel
$\T_L\times \R$ or circular flows $v= rU(r)e_{\theta}$ on $\R^2$. After possibly
relabeling the log-polar coordinates and considering weighted spaces (see
\cite{coti2019degenerate}, \cite{Zill6}) both settings can be considered in the framework
\begin{align*}
  \dt \omega + U(y)\p_x \omega - \beta(y)\p_x \Delta^{-1} =0,
\end{align*}
where $\beta(y)=U''(y)$ in the plane channel setting.

One observes that if $\beta(y)\equiv 0$ (corresponding to (Taylor-)Couette
flow) the equation has an explicit solution $\omega(t,x,y)=\omega(0,x-tU(y),y)$,
which weakly converges to its $x$-average. As a consequence the velocity field
strongly converges to an asymptotic profile as $t\rightarrow \infty$. This
phenomenon is known as (linear) inviscid damping. While results for this special
case follow by explicit calculation, the study of the asymptotic behavior of
non-trivial flows has been an area of active research in recent years, where a
guiding question has been to understand how robust this mechanism might be.

In this article, we construct a negative example in the form of a small sine wave perturbation to a linear shear
\begin{align*}
  U(y)=y+ c \sin(y) 
\end{align*}
with $|c|<\frac{1}{2}$. More precisely, for simplicity of calculation we
consider the approximate system $U(y)=y$ and $\beta(y)=2c\sin(y)$, see Section \ref{sec:Duhamel}.
We stress that $U(y)$ is Bilipschitz and smooth and that $\beta(y)$ is small and analytic.
Our choice of $U(y)=y$ is motivated by the existing linear
and nonlinear damping results for Couette flow, where strict monotonicity and the
associated shearing mechanism serves to stabilize the flow, but require either
smallness conditions (see for example \cite{coti2019degenerate}, \cite{Zill5}) or non-resonance conditions
(see for example \cite{WZZkolmogorov}).

A further, separate stabilizing mechanism is given by convexity and sign
conditions such as those used in the classical stability results of Rayleigh,
Fjortoft and Arnold. 
In this view, we further mention for instance the article of Bedrossian, Coti Zelati and Vicol
\cite{BCZVvortex2017} on circular flows, where monotone, convex profiles are
considered and where no smallness condition is required.
However, we remark that convexity by itself is only sufficient given a
beneficial sign. Indeed, in the sense of Arnold's theory Kolmogorov flow,
$U(y)=\cos(y)$, is strictly convex with $\frac{U''}{U}=-1$, but is known to be
nonlinearly stable only on a short torus $\T_L \times \T$,
$L<1$ but unstable on a long torus $L\gg 1$, \cite{meshalkin1961investigation}.

Our main question of this article concerns the robustness of the stabilization
by shearing for Bilipschitz, non-convex profiles for short and long tori, in analogy to the
results on Kolmogorov flow as a convex flow.
Our main results are summarized in the following theorem.

\begin{thm}
  \label{thm:summary}
  Let $c \in (0,\frac{1}{2})$ and consider the following linear problem
  \begin{align*}
    \dt \omega + y\p_x \omega -c\sin(y)\p_x\Delta^{-1}\omega=0,
  \end{align*}
  on the domain $\T_{L} \times \R$.
  Then if $cL <\frac{1}{2\pi}$ it holds that for any $s\geq 0$ the evolution is stable in
  $H^s$ in the sense that there exists $C_s$ such that for any time $t\geq 0$
  \begin{align*}
    \|\omega(t)\|_{H^s}\leq C_s \|\omega|_{t=0}\|_{H^s}.
  \end{align*}
  Furthermore, there exists $C_0$ such that if
  \begin{align*}
    \|\omega|_{t=0}\|_{\mathcal{G}_{1,C}}^2=\int \exp(C|\eta|) |\tilde{\omega}|_{t=0}(\eta)|^2 <\infty 
  \end{align*}
  for some $C>C_0$, then also $\|\omega(t)\|_{\mathcal{G}_{1,C-C_0}}<\infty$ for all
  times.

  On the other hand, suppose that $c<\frac{1}{10}$ and $L$ is such that $c \pi
  L>20$, but $ \pi c^2 L <1$ (e.g. choose $L \sim c^{-3/2}$). Then there exists smooth initial data $\omega_0$
  (compactly supported in Fourier space),
  a frequency $k$ and $d>1$ such that its Fourier transform satisfies
  \begin{align*}
    |\tilde{\omega}(t_j,k,\eta=kt_j)|\geq d^{t_j}
  \end{align*}
  for all $t_j=j \in \N$. That is, the evolution is
  exponentially growing along a sequence of times.
\end{thm}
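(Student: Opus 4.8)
Both halves rest on the same reduction, so the plan is to Fourier transform, decouple the equation into independent one-dimensional lattice systems, and then read each statement off the size of a single resonant interaction. First I would Fourier transform in $x$ and $y$, write $\tilde\omega(t,k,\eta)$, and pass to the profile $f(t,k,\eta)$ adapted to the shear $U(y)=y$ (absorbing the transport $\dt-k\p_\eta$). Since $\sin(y)$ only shifts $\eta$ by $\pm1$, the resulting equation, for each fixed $k\in\frac{2\pi}{L}\Z$, is a closed system on the lattice $\eta\in\eta_0+\Z$ (with $\eta_0\in[0,1)$ a fixed residue),
\[
  \dt f(t,k,\eta)=-\frac{ck}{2}\left(\frac{f(t,k,\eta-1)}{k^2+(\eta-1-kt)^2}-\frac{f(t,k,\eta+1)}{k^2+(\eta+1-kt)^2}\right),
\]
so the dynamics decouples over $k$ and over $\eta_0$, and every norm in the theorem becomes, after the harmless time-dependent relabelling $\eta\leftrightarrow\eta+kt$, a weighted $\ell^2$ norm of the lattice unknown. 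The coupling between neighbouring sites is resonant, of size $\sim c/|k|$, only on a $t$-window of length $\sim1$ about the instants $t_n\sim n/|k|$, and is much smaller elsewhere; integrating one such window shows that, to leading order, at the resonant time $t_n$ site $n$ passes an amplitude $-M$ times the one at site $n$ to site $n+1$ and $+M$ times it to site $n-1$, where
\[
  M:=\frac{\pi c}{2|k|},
\]
the data being essentially frozen in between. For the lowest frequency $k=2\pi/L$ this is $M=cL/4$, and Duhamel's formula presents the solution on a chain as a Dyson series whose $n$-step term is governed by the time-ordered path through the successive resonances, with weight $\sim M^n$ times the datum.

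In the stable range $cL<\frac1{2\pi}$ one has $M<1$ at every admissible $k$, so each resonance is contracting and the solution on a chain is dominated by the discrete convolution of the initial lattice data against the $\ell^1$ kernel $n\mapsto M^{|n|}$. Weighted Young's inequality then gives $\|\omega(t)\|_{H^s}\lesssim_s\|\omega_0\|_{H^s}$ for all $s$, uniformly in $t$, and the same convolution structure, combined with the relabelling $\eta\mapsto\eta+kt$, propagates Gevrey-$1$ regularity for each fixed $t$ with a fixed loss $C_0$ in the exponent. These are energy-estimate/contraction arguments and I expect them to be routine once the resonance structure is in hand.

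In the unstable range the point is instead that $M>1$. I would take $k=2\pi/L$, so $M=cL/4$: the hypothesis $c\pi L>20$ gives $M>5/\pi>1$, while $\pi c^2L<1$ says $cM=\tfrac14 c^2L<\frac1{4\pi}$ is small. For initial data I would take a smooth Fourier bump concentrated at the frequencies $|k|=2\pi/L$ and $|\eta|\lesssim1$, scaled so that every chain $\eta_0\in[0,1)$ carries an $O(1)$ amount of mass at its lowest site; this datum is compactly supported in Fourier space. Along each chain the cascade then advances one site every $\sim1/|k|=L/(2\pi)$ units of time and multiplies the amplitude by $M$ at each step, so by time $t$ the mass has reached site $\approx kt$ with amplitude $\gtrsim M^{kt}=(M^{k})^{t}$. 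Reading this back through the relabelling, the mode $\eta=kt$ of $\tilde\omega(t,k,\cdot)$ has size $\gtrsim M^{kt}$; fixing any $d\in(1,M^{k})$ and, if needed, rescaling the datum, this is $\ge d^{\,t}$, and restricting to $t_j=j$ gives the claim.

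The step I expect to be the main obstacle is the last one: showing that this idealized cascade genuinely controls the solution---equivalently, that the time-ordered term of the Dyson series dominates. I would close a bootstrap on each chain, in which the corrections come from the back-reaction on a site while it feeds its neighbour ($O(c^2)$ relatively per resonance, hence only a tiny shift of the rate), from the overlap of consecutive resonant windows (negligible since $1/|k|\gg1$), and from the off-resonant tails ($O(cM/n^2)$ relatively, summing to a bounded $O(cM)$ prefactor). The genuine difficulty, absent in the stable case, is that the leading cascade amplifies, so every non-ordered and non-resonant interaction path is amplified too; one must sum all of them over the whole lattice and all resonances and show they remain a small fraction of the single ordered path, with no room for a logarithmic loss in the number of steps. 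This is precisely what $cM=\tfrac14 c^2L<\frac1{4\pi}$ provides---it makes each defect in a path cost a factor $O(cM)$---while $M\ge5/\pi$ and $c<\frac1{10}$ keep the resulting growth rate, which is $k\ln M$ up to an $O(c^2)$ correction, strictly positive.
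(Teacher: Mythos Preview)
Your proposal is correct and follows essentially the same route as the paper. You identify the same Fourier/lattice reduction, the same resonance parameter (the paper's $r\approx c\pi/|k|$ is your $2M$), and the same smallness hierarchy $\delta\ll 1$, $r\gg 1$, $r\delta\ll 1$ (your $cM\ll 1$) that makes the ordered cascade dominate; for stability both arguments bound the Dyson series by convolution against the geometric kernel $(2d)^{|\cdot|}$.

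The only organizational difference worth noting is in the instability half: where you describe a bootstrap with error terms (back-reaction, window overlap, off-resonant tails), the paper instead enumerates \emph{all} paths between two lattice sites on the interval $[T_j,T_{j+1}]$, groups them by the tuple $(j_1,j_2,j_3,j_4)$ counting non-resonant steps before the first resonance, number of resonances, non-resonant steps between resonances, and non-resonant steps after the last resonance, and bounds each group by $\delta^{j_1+j_3+j_4}r^{j_2}$. Summing these geometric series directly gives the upper bound $Cr(2\delta)^{|\eta_0-j|+|\eta-j\pm1|}$ and isolates the single path $(j,j+1)$ of value $r$ as the leading term. This is exactly the content of your bootstrap, made combinatorially explicit; your identification of $cM$ as the cost of each ``defect'' corresponds to the paper's condition $r\delta<\tfrac14$.
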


The growth mechanism in the second result here is not given by an eigenfunction
construction, but given by a new \emph{cascade mechanism} which propagates in the
frequency $\eta$ associated to $y$.
We stress that this resonance chain is distinct from the one considered in
\cite{deng2018}, \cite{bedrossian2013inviscid} which propagates in the frequency
$k$ associated to $x$ (see Section \ref{sec:instability}).

For simplicity of calculation and presentation we neglect the effects of
boundaries and consider the setting of an infinite periodic channel
$\T_{L}\times \R$.
We however expect that our results should extend with moderate technical effort
to the setting of compactly supported perturbations to Couette flow (which is
considered in \cite{jia2019linear}) in the case of Gevrey regularity and to more general
Bilipschitz flows as considered in \cite{Zill5}.

Our article is organized as follows:
\begin{itemize}
\item In Section \ref{sec:Duhamel} we first establish global well-posedness of the
  equations and some rough upper growth bounds.
  Afterwards we establish the stability statement of Theorem \ref{thm:summary} by
  means of a Duhamel iteration estimate. We further present a second proof by
  means of a Lyapunov functional, which is very transparent but requires
  stronger assumptions.
\item In Section \ref{sec:instability} we analyze the resonance mechanism in
  detail and how it can excite neighbors of resonant modes. As our main result
  we show that if $cL$ is sufficiently large this growth can be sustained,
  resulting in an exponential growth rate. In particular, this shows that it is
  not sufficient for a flow to be smooth and bilipschitz in order for asymptotic
  stability and linear inviscid damping to hold, but that some further smallness
  or non-resonance condition is required.
  While some eigenvalue instability constructions are available in the
  literature \cite{Zhang2015inviscid}, we here present a new cascade mechanism underlying this sustained
  growth as well an explicit example.
\end{itemize}

\subsubsection*{Acknowledgments}
Yu Deng acknowledges support by NSF grant DMS-1900251.

Christian Zillinger would like to thank the Max-Planck Institute for Mathematics
in the Sciences where part of this work was written for its hospitality.

Christian Zillinger's research is supported by the ERCEA under the grant 014
669689-HADE and also by the Basque Government through the BERC 2014-2017
program and by Spanish Ministry of Economy and Competitiveness MINECO: BCAM Severo Ochoa excellence accreditation SEV-2013-0323.

\section{Global Well-posedness and Duhamel Iterations}
\label{sec:Duhamel}

The linearized 2D Euler equations around $v=(y+c \sin(y),0)$ are given by:
\begin{align*}
  \dt \omega + (y+c\sin(y))\p_x \omega + c \sin(y) v_2=0.
\end{align*}
As the map $\omega \mapsto v_2$ is compact, we consider the transport by the
linear profile to be dominant, change to
Lagrangian coordinates $(x+ty,y)$ and consider
\begin{align*}
W(t,x,y)=:\omega(t,x+ty,y).
\end{align*}
With respect to these coordinates the equation is given by 
\begin{align*}
  \dt W + c\sin(y)\p_x W + c \sin(y) \p_x \Delta_{t}^{-1} W &=0 ,\\
  \Delta_t&:=\p_x^2+(\p_y-t\p_x)^2.
\end{align*}
Here, our choice of coordinates has the benefit of the very transparent
structure of $\Delta_t^{-1}$ as a Fourier multiplier.
As the coefficient functions do not depend on $x$, we note that this system
decouples in the associated frequency $k$, which we hence treat as arbitrary but
fixed.
Since our main focus is on considering the effect of long tori (and thus very
small frequency $k$), we further consider an approximate system
\begin{align}
   \dt W + c \sin(y) \p_x \Delta_{t}^{-1} W &=0,
\end{align}
where we omit the contribution by $c\sin(y)\p_x$ and with slight abuse of notation denote the
Fourier transform of $W$ by $\omega$ again.

This then leads to the following system:
\begin{align}
  \dt \omega(\eta) + \frac{c}{2} \frac{k}{k^2+(\eta+1-kt)^2} \omega(\eta+1) - \frac{c}{2} \frac{k}{k^2+(\eta-1-kt)^2} \omega(\eta-1)=0.
\end{align}
Since the system decouples in $k$ and leaves the mode $k=0$ invariant,
in the following we without loss of generality consider $k\neq 0$ and
equivalently rescale
time as $\tau=kt$, which yields:
\begin{align}
  \label{eq:1}
  \p_\tau \omega(\eta) +  \frac{c}{2} \frac{1}{k^2+(\eta+1-\tau)^2} \omega(\eta+1) - \frac{c}{2} \frac{1}{k^2+(\eta-1-\tau)^2} \omega(\eta-1)=0.
\end{align}
\begin{rem}
  \label{rem:periodic}
This equation is an ODE system with nearest-neighbor interaction.
In particular, since modes interact only via chains of neighbors, the system
decouples into problems on $\eta \in \eta^*+\Z$, with $\eta^* \in [0,1)$.
Considering $\eta^{*}$ as arbitrary but fixed and with slight abuse of notation
shifting time by $\eta^*$ to prescribe ``initial'' data at that time it thus
suffices to consider the \emph{periodic} problem $\eta \in \Z$.
We remark that this setting would also appear when considering periodic perturbations with
respect to Lagrangian coordinates $(x+ty,y)$, which seems physically
unmotivated, but by the above consideration appears naturally due to the
decoupling structure.
\end{rem}

Roughly estimating the coefficient functions by $\frac{c}{2k^2}$, we immediately
obtain the following suboptimal global well-posedness result.

\begin{lem}
  \label{lem:Gronwall}
  Let $X= L^2(\rho, \frac{2\pi}{L}\Z \times \R)$ be a weighted $L^2$ space in Fourier space such that its
  weight satisfies
  \begin{align*}
  \sup_{k,\eta}\frac{\rho(k,\eta\pm 1)}{\rho(k,\eta)} \leq C_1 <\infty.
  \end{align*}
  This for example includes fractional Sobolev spaces $H^s$ or Gevrey spaces.
  
  Then there exists a constant $C$ such that the solution $\omega$ of \eqref{eq:1} satisfies
  \begin{align*}
    \|\omega(\tau)\|_{X} \leq \exp(C cL^2 \tau)\|\omega_0\|_{X}
  \end{align*}
  for all $\tau\geq 0$.
\end{lem}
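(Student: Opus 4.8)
The plan is to run a soft Grönwall argument on the squared $X$-norm, after recording that every Fourier multiplier appearing in \eqref{eq:1} is uniformly bounded. The first step is the quantitative input behind the phrase ``estimating the coefficient functions by $\frac{c}{2k^2}$'': since the mode $k=0$ is left invariant and \eqref{eq:1} decouples in $k$, we may assume $k\in\frac{2\pi}{L}\Z\setminus\{0\}$, so that $|k|\ge \frac{2\pi}{L}$ and hence
\begin{align*}
  0<\frac{c}{2}\,\frac{1}{k^2+(\eta\pm1-\tau)^2}\le \frac{c}{2k^2}\le \frac{cL^2}{8\pi^2}
\end{align*}
uniformly in $\eta\in\R$, $\tau\ge0$ and the admissible $k$. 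Consequently the right-hand side of \eqref{eq:1} is (minus) a bounded linear operator $A(\tau)\colon X\to X$: estimating $|(A(\tau)\omega)(k,\eta)|^2$ by the coefficient bound above, translating the integration variable, and using the weight comparison $\rho(k,\eta\pm1)\le C_1\rho(k,\eta)$ gives $\|A(\tau)\|_{X\to X}\le \frac{\sqrt{C_1}\,cL^2}{4\pi^2}$, and $\tau\mapsto A(\tau)$ is continuous (in fact smooth). Global existence and uniqueness of $\omega\in C^1([0,\infty);X)$ then follow from the standard theory of linear ODEs in a Banach space (Picard iteration / Duhamel), so that only the growth bound remains.

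For the bound I would differentiate $\tau\mapsto\|\omega(\tau)\|_X^2=\sum_{k}\int_\R\rho(k,\eta)|\omega(k,\eta)|^2\,d\eta$, insert \eqref{eq:1}, and control the two families of cross terms $\mathrm{Re}\big(\overline{\omega(\eta)}\,\omega(\eta\pm1)\big)$ by $\tfrac12\big(|\omega(\eta)|^2+|\omega(\eta\pm1)|^2\big)$ together with the coefficient bound. The shifted contributions $|\omega(\eta\pm1)|^2$ are absorbed exactly as above, by translating the integration variable and invoking $\rho(k,\eta\mp1)\le C_1\rho(k,\eta)$; after summing in $k$ one obtains $\frac{d}{d\tau}\|\omega(\tau)\|_X^2\le C\,cL^2\,\|\omega(\tau)\|_X^2$ for a constant $C$ depending only on $C_1$. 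Grönwall's inequality yields $\|\omega(\tau)\|_X^2\le e^{CcL^2\tau}\|\omega_0\|_X^2$, and taking square roots (and relabelling $C$) gives the claim; alternatively one may skip the energy computation entirely and quote $\|\omega(\tau)\|_X\le \exp\!\big(\int_0^\tau\|A(s)\|_{X\to X}\,ds\big)\|\omega_0\|_X$ directly.

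There is no genuine obstacle in this lemma: the argument is entirely soft, and the only points needing care are (i) discarding the mode $k=0$ so that $|k|\ge 2\pi/L$ — this is precisely what turns the coefficient bound into a factor $L^2$ — and (ii) the change of variables against the weight $\rho$, where the hypothesis $\sup_{k,\eta}\rho(k,\eta\pm1)/\rho(k,\eta)\le C_1$ is used (and is satisfied by $H^s$ and Gevrey weights, as noted). I would also stress that the estimate is deliberately wasteful: it throws away the decay $\frac{1}{k^2+(\eta\pm1-\tau)^2}=O(\tau^{-2})$ of the coefficients as $\tau\to\infty$, together with the delicate near-resonant structure hidden when $\eta\pm1-\tau$ is small. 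Recovering this structure — and thereby deciding between stability and growth depending on the size of $cL$ — is exactly the purpose of the sharper Duhamel-iteration estimate later in this section and of the resonance-chain analysis in Section \ref{sec:instability}.
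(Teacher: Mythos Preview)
Your proposal is correct and is essentially the paper's argument, carried out in more detail; in fact the ``alternative'' you mention at the end---bounding $\|\partial_\tau\omega\|_X\le CcL^2\|\omega\|_X$ via the operator norm of $A(\tau)$ and integrating---is exactly the one-line proof the paper gives. Your additional remarks on discarding $k=0$, on the role of the weight hypothesis, and on why the bound is deliberately wasteful are all accurate and helpful context.
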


We remark that the time variable $\tau$ includes a factor $L$ and that expressed
with respect to $t$ the growth factor is given by $\exp (C cL t)$.
\begin{proof}
  We note that
  \begin{align*}
    \|\p_\tau \omega\|_{X}\leq \| c \frac{1}{k^2+(\eta\pm 1-\tau)^2} \omega(\eta\pm 1)\|_{X} \leq cC L^2\|\omega\|_{X}.
  \end{align*}
  The result then immediately follows by integrating the differential inequality.
\end{proof}

While this a priori bound is sufficient to establish global well-posedness, it
is far from sharp. Indeed, the following theorem shows that for $cL$ sufficiently
small, the evolution is globally stable in a Lyapunov sense. In contrast, in
Theorem \ref{thm:instability} we show that if $L \gg c^{-1}$, then this stability fails and the
evolution is exponentially unstable. Here, we stress that the shear profile is
Bilipschitz and $c \sin(y)$ is smooth and bounded. Furthermore, we show that the
described growth is due to a new cascade mechanism, where chains
of resonances excite higher and higher modes in $y$ (while the nonlinear echo
chain mechanism excites smaller and smaller modes in $x$ and stops at mode $1$).

As first result, we consider the setting of very small $c$, which is a amenable
to the construction of a Lyapunov functional as in \cite{coti2019degenerate}.
Here, the simple Fourier structure allows for a particularly transparent proof.
In a second Theorem \ref{thm:secondstability} we introduce a different method of
proof by expressing the Duhamel iteration as formal infinite series of
\emph{integrals over paths} (see Definition \ref{defi:path}). 

\begin{thm}[Global stability for small $cL$]
  \label{thm:multiplier}
  Let $j \in \N$ and consider the stability problem for the pde \eqref{eq:1} on
  $\T_{L}\times \R$ in $H^{j}$. Then there exists $C_0=C_0(j)>0$ such that if $
  cL <C_0$, there exists a constant $C$ such that
  \begin{align*}
    \|\omega(\tau)\|_{H^{j}}\leq C \|\omega_0\|_{H^{j}}
  \end{align*}
  for all $\tau\geq 0$.
  Furthermore, $\p_x \Delta_{t}^{-1}W \in L^2_tH^{j}$ and $\omega$ converges
  strongly in $H^{j}$ as $\tau \rightarrow \infty$.
\end{thm}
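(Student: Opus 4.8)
The plan is to construct a Lyapunov functional adapted to the near-diagonal structure of the ODE system \eqref{eq:1}. Writing the system in Fourier as $\p_\tau \omega(\eta) = \tfrac{c}{2}\bigl(a_-(\tau,\eta)\,\omega(\eta-1) - a_+(\tau,\eta)\,\omega(\eta+1)\bigr)$ with $a_\pm(\tau,\eta) = \bigl(k^2 + (\eta\pm1-\tau)^2\bigr)^{-1}$, the naive energy estimate loses a factor of $L^2$ because $\sup a_\pm \sim k^{-2} \sim L^2$; the point is that this worst case only occurs for a bounded window of times $\tau$ near each critical value $\eta \pm 1 = \tau$, and is not sustained. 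I would therefore define, for the $H^j$ norm, a weighted functional of the form
\begin{align*}
  E(\tau) = \sum_{\eta} w(\tau,\eta)\,|\omega(\tau,\eta)|^2, \qquad w(\tau,\eta) \asymp \langle \eta\rangle^{2j}\exp\bigl(-\Phi(\tau,\eta)\bigr),
\end{align*}
where $\Phi$ is a bounded correction (uniformly small when $cL$ is small) designed so that the contribution of the off-diagonal terms is absorbed by $\dot\Phi$. Concretely, one wants $-\p_\tau \Phi(\tau,\eta)$ to dominate $c\bigl(a_+(\tau,\eta) + a_-(\tau,\eta)\bigr)$ after a Cauchy–Schwarz splitting of the cross terms; a natural choice is $\Phi(\tau,\eta) = c\sum_{\pm}\arctan\bigl((\eta\pm 1 - \tau)\cdot(\text{something})\bigr)$ or similar, since $\int a_\pm\,d\tau$ is exactly an arctangent and hence globally bounded by $\pi/k^2 \cdot c$, which is $O(cL^2)$ — still not small. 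The key refinement is that the dangerous interaction at mode $\eta$ and the dangerous interaction at mode $\eta\pm1$ happen at times differing by $1$, so after summing in $\eta$ the relevant quantity per unit time is $c\sup_\eta a_\pm \cdot (\text{number of } \eta \text{ in resonance}) \sim cL \cdot L$...

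Let me restructure: the cleaner route, and the one I expect the paper to take given the hypothesis is $cL < C_0$ rather than $cL^2 < C_0$, is to first rescale the frequency weight so that the $k^{-2}$ is distributed. Rewrite $a_\pm = k^{-2}b_\pm$ with $b_\pm(\tau,\eta) = k^2 a_\pm = \bigl(1 + ((\eta\pm1-\tau)/k)^2\bigr)^{-1}$, and change the time variable back so that $\int b_\pm\,d\tau = k\int b_\pm\, d(\tau/k) = k\pi = \tfrac{2\pi}{L}\cdot\tfrac{L}{2\pi}\cdot \ldots$; in any case $\tfrac{c}{k^2}\int b_\pm\,d\tau = \tfrac{c}{k^2}\cdot k\pi = \tfrac{c\pi}{k} = \tfrac{c\pi L}{2\pi} = \tfrac{cL}{2}$. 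So the total "impulse" delivered by one off-diagonal coefficient over all time is exactly $O(cL)$, which is the scaling in the theorem. Thus I would take $\Phi(\tau,\eta)$ to be a primitive of $c\bigl(a_+(\cdot,\eta)+a_-(\cdot,\eta)\bigr)$ (up to constants), bounded by $O(cL)$, and run a Grönwall-type estimate $\dot E \le 0$ once $cL < C_0$; the $\langle\eta\rangle^{2j}$ factor is harmless because $\langle\eta\pm1\rangle^{2j}/\langle\eta\rangle^{2j}$ is bounded uniformly (this is precisely the hypothesis on the weight $\rho$ in Lemma~\ref{lem:Gronwall}), contributing a fixed constant $C_1(j)$ that is then reabsorbed by shrinking $C_0$. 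The uniform bound $\|\omega(\tau)\|_{H^j} \le C\|\omega_0\|_{H^j}$ follows from $e^{\Phi}\le C$.

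For the two additional claims: $\p_x\Delta_t^{-1}W \in L^2_t H^j$ means $\int_0^\infty \sum_\eta \langle\eta\rangle^{2j}\bigl|\tfrac{k}{k^2+(\eta-\tau)^2}\omega(\tau,\eta)\bigr|^2\,d\tau < \infty$ (in the original time variable $t$, with the $dt = d\tau/k$ factor). Since $\|\omega\|_{H^j}$ is bounded and $\int \bigl(\tfrac{k}{k^2+(\eta-\tau)^2}\bigr)^2\,d\tau = \tfrac{\pi}{2k} = \tfrac{L}{4}$ is finite and summable against the bounded energy after interchanging sum and integral (dominated convergence, using that for fixed $\tau$ only $O(1)$ values of $\eta$ contribute significantly), this is a direct computation. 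For strong convergence of $\omega$ in $H^j$ as $\tau\to\infty$: from \eqref{eq:1}, $\p_\tau\omega(\eta) = O\bigl(c\,(a_+ + a_-)\,\|\omega\|_{H^j}\langle\eta\rangle^{-j}\bigr)$ componentwise, and $\int_0^\infty c(a_+ + a_-)\,d\tau < \infty$ uniformly in $\eta$, so each $\omega(\tau,\eta)$ is Cauchy; combined with the uniform $H^j$ bound and an equismallness-of-tails argument (the bound $\int_T^\infty \|\p_\tau\omega\|_{H^j}\,d\tau \to 0$ as $T\to\infty$, which needs a little care since $\|\p_\tau\omega\|_{H^j}$ itself need not be integrable — but $\|\p_\tau\omega\|_{H^{j-1/2}}$ or a Cauchy-in-$\tau$ argument in the slightly weaker norm plus uniform boundedness in $H^j$ gives convergence in $H^j$ by interpolation), one upgrades to strong $H^j$ convergence. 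The main obstacle is getting the Lyapunov weight $\Phi$ right so that the cross-term absorption is genuinely uniform in $\eta$ and the constant $C_0$ truly depends only on $j$ — in particular handling the $\langle\eta\pm1\rangle/\langle\eta\rangle$ discrepancy and the two coefficients $a_+, a_-$ simultaneously without a logarithmic or $\eta$-dependent loss; the convergence statement is comparatively routine once the uniform bound and the $L^2_t$ integrability are in hand.
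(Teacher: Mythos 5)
Your Lyapunov weight construction is essentially the one the paper uses: the paper takes $a(\tau,\eta)=\exp\bigl(C_1 c\arctan(C_2(\eta-\tau))\bigr)$ with $C_2=k^{-1}$, $C_1=4k^{-1}$, which is exactly the bounded arctangent primitive of the interaction coefficient that you arrive at, and then absorbs the $\langle\eta\rangle^{2j}$ weight with an inductive binomial expansion in place of your cruder uniform bound on $\langle\eta\pm1\rangle^{2j}/\langle\eta\rangle^{2j}$. Either route gives the uniform $H^j$ bound with a $j$-dependent $C_0$, which is what the statement allows. So the first (and main) part of your plan matches the paper and is sound.

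However, your treatment of the two additional conclusions has real gaps. For $\p_x\Delta_t^{-1}W\in L^2_\tau H^j$, uniform $H^j$ boundedness alone is \emph{not} enough: the best you can extract from it is $\sum_\eta\langle\eta\rangle^{2j}\bigl(\tfrac{k}{k^2+(\eta-\tau)^2}\bigr)^2|\omega(\tau,\eta)|^2\lesssim k^{-1}\|\omega(\tau)\|_{H^j}^2$, which is bounded in $\tau$ but not integrable; the ``only $O(1)$ modes contribute'' heuristic does not help because the resonant window moves with $\tau$, so over infinite time every $\eta$ contributes its full $\int(\cdot)^2 d\tau\sim k^{-1}$, and the $\eta$-sum then diverges. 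The missing input is the Lyapunov \emph{dissipation}: integrating $\tfrac{d}{d\tau}\sum_\eta a_j(\tau,\eta)|\omega|^2\le -\tfrac{C}{2}\sum_\eta\tfrac{c}{k^2+(\eta-\tau)^2}a_j(\tau,\eta)|\omega|^2$ in time gives the $L^1_\tau$ bound on the dissipation term for free, and the velocity multiplier is controlled by it via $\bigl(\tfrac{k}{k^2+(\eta-\tau)^2}\bigr)^2\le c^{-1}\cdot\tfrac{c}{k^2+(\eta-\tau)^2}$. Similarly, for strong $H^j$ convergence your interpolation route falls short: frequency-wise Cauchy plus uniform $H^j$ boundedness, interpolated, gives strong convergence only in $H^{j'}$ for $j'<j$. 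The clean argument again uses the dissipation: by Cauchy--Schwarz in $\tau$ one gets $\|\omega(\tau_2)-\omega(\tau_1)\|_{H^j}^2\lesssim \tfrac{c}{k}\int_{\tau_1}^{\tau_2}\sum_\eta\langle\eta\rangle^{2j}\tfrac{c}{k^2+(\eta-\tau)^2}|\omega(\tau,\eta)|^2\,d\tau$, and the right-hand side tends to $0$ as $\tau_1\to\infty$ because the integrand is in $L^1_\tau$. In short: both secondary claims hinge on the quantitative nonpositivity of $\tfrac{d}{d\tau}E$, not merely on $E(\tau)\le E(0)$, and your write-up never extracts and uses that dissipation.
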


\begin{proof}
  We define the Fourier weight
  \begin{align*}
    a(\tau,\eta)= \exp(C_1 c\arctan(C_2(\eta-\tau)),
  \end{align*}
  with $C_1, C_2>0$ to be fixed later.
  
  Then it holds that
  \begin{align*}
    & \quad \frac{d}{d \tau} a(\tau,\eta)|\omega(\tau,\eta)|^2 \\
    &= - C_1 C_2\frac{c}{1+C_2^2(\eta-\tau)^2} a(\tau,\eta)|\omega(\tau,\eta)|^2 \\
    & \quad + 2 a(\tau,\eta) \frac{c}{k^2+(\eta+1 -\tau)^2} \omega(\tau,\eta+1) \omega(t, \eta)\\
    & \quad - 2 a(\tau,\eta) \frac{c}{k^2+(\eta-1 -\tau)^2} \omega(\tau,\eta-1) \omega(\tau, \eta).
  \end{align*}
  We may estimate $|\omega(\tau,\eta \pm 1)\omega(\tau,\eta)|\leq
  \frac{1}{2}(|\omega(\tau,\eta\pm 1)|^2 + |\omega(\tau,\eta)|^2)$ and sum over all
  $\eta$ to obtain:
  \begin{align*}
    & \quad \frac{d}{d\tau} \sum_{\eta} a(\tau,\eta)|\omega(t,\eta)|^2 \\
    &\leq - C_1 C_2 \sum_{\eta}  \frac{c}{1+C_2^2(\eta-t)^2} a(\tau,\eta)|\omega(\tau,\eta)|^2  \\
    & \quad + \sum_{\eta} a(\tau, \eta) (\frac{c}{k^2+(\eta+1 -t)^2} + \frac{c}{k^2+(\eta-1 -t)^2}) |\omega(\tau,\eta)|^2  \\
    & \quad +  \sum_{\eta} (a(\tau, \eta-1)+a(\tau, \eta+1)) \frac{c}{k^2+(\eta-\tau)^2}|\omega(\tau,\eta)|^2,
  \end{align*}
  where we shifted $\eta$ in the last sum.
By the definition of our weight $a$ it holds that
  \begin{align*}
    \left(a(\tau, \eta-1)+a(\tau, \eta+1)\right) \leq \exp(2C_1c) a(\tau,\eta)
  \end{align*}
  and that
  \begin{align*}
    (\frac{c}{k^2+(\eta+1 -\tau)^2} + \frac{c}{k^2+(\eta-1 -\tau)^2}) \leq 4 k^{-2} \frac{c}{1+k^{-2}(\eta-\tau)^2}.
  \end{align*}
  Hence, we may conclude that
  \begin{align}
    \label{eq:2}
    \frac{d}{d\tau} \sum_{\eta} a(\tau,\eta)|\omega(\tau,\eta)|^2 \leq - \frac{C}{2} \sum_{\eta}  \frac{c}{k^2+(\eta-\tau)^2} a(\tau,\eta)|\omega(\tau,\eta)|^2 \leq 0,
  \end{align}
  provided that
  \begin{align}
    \label{eq:3}
    C_2=k^{-1}, \\
    C_1C_2 - 2 k^{-2}\exp(2C_1c)\geq \frac{C}{2}.
  \end{align}
  We may then fix $C_2=k^{-1}\leq L$, $C_1=4 k^{-1}$, at which point the estimate reduces to
  $4-2\exp(2cL)=: C/2 >0$, which is satisfied if $cL$ is sufficiently small.

  We note that \eqref{eq:2} implies $l^2$ stability, since $a(\tau, \eta)\approx 1$.
  Furthermore, as $\sum_{\eta} a(\tau,\eta)|\omega(t,\eta)|^2 |_{t=0}^T$ is
  bounded, it follows that
  \begin{align*}
    \sum_{\eta}  \frac{c}{k^{2}+(\eta-t)^2} a(\tau,\eta)|\omega(\tau,\eta)|^2 
  \end{align*}
  is integrable in time. Expressing the evolution equation in integral form then
  further yields the claimed asymptotic stability in $L^2$.

  It remains to establish stability in higher Sobolev norms.
  We define
   \begin{align*}
    a_j(\tau, \eta):= \langle \eta \rangle^j a(\tau,\eta)
   \end{align*}
   for $j \in \N$ and further define
   \begin{align*}
     A_j(\tau,\eta): =  a_{j}(\tau,\eta)+ \sum_{j'\leq j-1} {j \choose j'} A_{j'}(\tau,\eta).
   \end{align*}
   Using \eqref{eq:2} as the start of a proof by induction, we then claim that
   \begin{align}
     \label{eq:4}
     \frac{d}{d\tau} \sum_{\eta} A_{j}(\tau,\eta) |\omega(\eta)|^2 \leq - \frac{C}{2} \sum_{\eta}\frac{c}{k^2+(\eta-\tau)^2}A_{j}(\tau,\eta) |\omega(\eta)|^2.
   \end{align}
   Let thus $\hat{j}\geq 1$ be given and suppose that \eqref{eq:4} is satisfied
   for all $j\leq \hat{j}$.
   Then it holds that
   \begin{align*}
     &\quad \frac{d}{dt}\sum_{\eta} A_{\hat{j}+1}(\tau,\eta) |\omega(\eta)|^2 \\
     &=  \frac{d}{dt}\sum_{\eta} a_{\hat{j}+1}(\tau,\eta) |\omega(\eta)|^2 + 2\frac{d}{dt} \sum_{\eta} A_{\hat{j}}(\tau,\eta) |\omega(\eta)|^2 \\
     &\leq -C \sum_{\eta} \frac{c}{1+(\eta-\tau)^2} a_{\hat{j}+1}(\tau,\eta) |\omega(\eta)|^2 \\
     & \quad +  2\sum_{\eta} a_{\hat{j}+1}(\tau,\eta) \omega(\tau,\eta) \frac{d}{d\tau}\omega(\tau,\eta) \\
     & - C \sum_{\eta}\frac{c}{1+(\eta-\tau)^2}A_{\hat{j}}(\tau,\eta) |\omega(\eta)|^2.
   \end{align*}
 Plugging in the equation for $ \frac{d}{dt}\omega(\tau,\eta)$ and using
 Young's inequality, we again have to control shifts:
  \begin{align*}
    a_{\hat{j}+1}(\tau,\eta\pm 1) &= \langle \eta \pm 1 \rangle^{\hat{j}+1} a(\tau,\eta\pm 1) \\
                     &\leq \frac{a(\tau,\eta \pm 1)}{a(\tau,\eta)} a_{\hat{j}+1}(\tau,\eta) +  (\langle \eta \pm 1 \rangle^{\hat{j}+1} - \langle \eta \rangle^{\hat{j}+1}) a(\tau,\eta \pm 1) \\
                     &\leq \exp(cC) a_{\hat{j}+1}(\tau,\eta) + \sum_{j'\leq \hat{j}}{j \choose j'}  a_{j'}(\tau,\eta \pm 1),
  \end{align*}
  where the binomial factors are obtained by expanding $\langle \eta \pm 1
  \rangle^{\hat{j}+1} - \langle \eta \rangle^{\hat{j}+1}$.
  As in the case $\hat{j}=0$, $\exp(cC) a_{\hat{j}+1}(\tau,\eta)$ may be absorbed by $\frac{d}{d\tau} a_{\hat{j}+1}(\tau,\eta)$
  provided $c$ is sufficiently small. The additional correction by ${j \choose j'}  a_{j'}(\tau,\eta \pm 1)$ is of lower order and can be absorbed into
  \begin{align*}
    - C \sum_{\eta}\frac{c}{k^2+(\eta-\tau)^2}A_{\hat{j}}(\tau,\eta) |\omega(\eta)|^2.
  \end{align*}
\end{proof}

While the preceding method of proof allows for a useful control of the
evolution and stability and is very explicit, it is only applicable in the regime of very small $cL$
and does not allow for a more precise description of the evolution in terms of
the initial data.

In the following we hence instead argue by considering iterated
Duhamel iterations as a formal infinite series. We remark that in \cite{dengZ2019} we employed
similar methods to study the (nonlinear) echo chain mechanism.

Lemma \ref{lem:Duhamel} establishes the convergence of the Duhamel iteration under very mild assumptions.
In the following we then relate the Duhamel iteration to considering sums over
\emph{paths} (see Definition \ref{defi:path}). Theorem \ref{thm:secondstability} serves
to introduce our new techniques in a transparent and accessible manner, also for
$c$ small. Afterwards, in Theorem \ref{thm:instability} we show that for $c$
larger than this, but still smaller than $0.5$, a cascade of resonances yields exponential growth and a well-described non-trivial asymptotic behavior.

\begin{lem}
  \label{lem:Duhamel}
  Let $U(y)$ be a measurable function and let $\beta \in L^\infty$.
  Consider the problem
  \begin{align*}
    \dt \omega + \beta(y) \p_x \omega \tilde{\Delta}_t^{-1} \omega =0,
  \end{align*}
  where $\tilde{\Delta}_t$ is obtained by conjugating $\Delta$ with the flow with
  $(U(y),0)$.
  Since the equations preserve the $x$ average, additionally assume without
  loss of generality that $\int \omega_0dx =0$.
  Then there exists a constant $C$ possibly depending on the domain such that for all
  times $t\geq 0$ it holds that 
  \begin{align*}
    \|\omega(t)\|_{L^2}^2 \leq e^{C\|\beta\|_{L^\infty} t} \|\omega_0\|_{L^2}^2.
  \end{align*}
  Furthermore, the formal infinite Duhamel series
  \begin{align*}
    \text{Id} + \int_0^t \beta(y) \p_x \omega \tilde{\Delta}_{t_1}^{-1} d t_{1} \\
    + \int_0^t \int_0^{t_1}\beta(y) \p_x \omega \tilde{\Delta}_{t_1}^{-1}
    \beta(y) \p_x \omega \tilde{\Delta}_{t_2}^{-1} d t_{2} d t_1 + \dots
  \end{align*}
  is convergent in the $L^2$ operator topology with the $j^{th}$-term bounded by $\frac{(C\|\beta\|_{L^\infty}t)^{j}}{j!}$.
\end{lem}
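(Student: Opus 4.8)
The plan is to view the equation as the linear non-autonomous ODE $\dt\omega=-B(t)\omega$ on $L^2$, where $B(t):=\beta(y)\,\p_x\tilde{\Delta}_t^{-1}$ is the non-local operator in the equation, and to reduce both assertions to a single fact: $B(t)$ is bounded on the evolution-invariant subspace $\{\,f:\int f\,dx=0\,\}$ with operator norm bounded \emph{uniformly in $t$}. Granting this, the $L^2$ estimate is Grönwall's inequality and the Duhamel series is the Neumann series of the solution operator, the simplex-volume factor producing the claimed $\tfrac{(C\|\beta\|_{L^\infty}t)^j}{j!}$.

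For the key fact I would write $\tilde{\Delta}_t^{-1}=S_t\,\Delta^{-1}\,S_t^{-1}$, where $S_t$ is composition with the time-$t$ shear map $\Phi_t\colon(x,y)\mapsto(x+tU(y),y)$ of the vector field $(U(y),0)$. This map has unit Jacobian, so $S_t$ is an isometry of $L^2$; it acts only in $x$, so it commutes with $\p_x$ and preserves the mean-zero subspace. Hence $\p_x\tilde{\Delta}_t^{-1}=S_t\,(\p_x\Delta^{-1})\,S_t^{-1}$ and
\begin{align*}
  \|\p_x\tilde{\Delta}_t^{-1}\|_{L^2\to L^2}\le\|\p_x\Delta^{-1}\|_{L^2\to L^2}.
\end{align*}
On $\T_L\times\R$ the operator $\p_x\Delta^{-1}$ is, on mean-zero functions, the Fourier multiplier $-ik/(k^2+\eta^2)$ with $|k|\ge 2\pi/L$, hence bounded by $C_0:=L/(2\pi)$; so $\|B(t)\|_{L^2\to L^2}\le C_0\|\beta\|_{L^\infty}$ for all $t$. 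A routine density argument (continuity of $x$-translation on $L^2(\T_L)$, then dominated convergence in $y$) shows $t\mapsto B(t)$ is strongly continuous, so $\dt\omega=-B(t)\omega$ has a unique mild solution, obtained by Picard iteration of $\omega(t)=\omega_0-\int_0^tB(s)\omega(s)\,ds$, which converges precisely because of the uniform bound.

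Pairing the equation with $\omega$ gives $\tfrac{d}{dt}\|\omega(t)\|_{L^2}^2=-2\,\mathrm{Re}\,\langle B(t)\omega(t),\omega(t)\rangle\le 2C_0\|\beta\|_{L^\infty}\|\omega(t)\|_{L^2}^2$, and integrating the differential inequality yields $\|\omega(t)\|_{L^2}^2\le e^{2C_0\|\beta\|_{L^\infty}t}\|\omega_0\|_{L^2}^2$ with $C:=2C_0$. Iterating the integral equation, the $j$-th term of the Duhamel series is $(-1)^j\int_{0\le t_j\le\cdots\le t_1\le t}B(t_1)\cdots B(t_j)\,dt_j\cdots dt_1$, whose $L^2$ operator norm is at most $(C_0\|\beta\|_{L^\infty})^j$ times the volume $t^j/j!$ of the $j$-simplex; this is the stated bound, and summing gives absolute convergence in the $L^2$ operator topology, to the solution operator of the previous paragraph.

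The only genuine obstacle is the uniform-in-$t$ bound of the second paragraph, and it is exactly there that the two standing hypotheses are used: the zero-$x$-average assumption lets one estimate the symbol $|k|/(k^2+\eta^2)$ by $1/|k|$, and the periodicity in $x$ keeps $|k|$ away from $0$. On $\R^2$ the supremum $\sup_{k,\eta}|k|/(k^2+\eta^2)$ is infinite and the argument would have to be modified.
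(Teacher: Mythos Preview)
Your proof is correct and, for the $L^2$ growth bound, coincides with the paper's: both argue that the shear map is an $L^2$ isometry commuting with $\p_x$, so that $\p_x\tilde\Delta_t^{-1}$ has operator norm equal to that of $\p_x\Delta^{-1}P_{\neq}$ uniformly in $t$, and then apply Gronwall.

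For the Duhamel series the paper takes a slightly different route. After observing the uniform operator bound, it decouples in $k$, writes the kernel of $\beta\,\p_x\tilde\Delta_t^{-1}$ explicitly as $e^{ikt(U(y)-U(y'))}K(y,y')$ with $K(y,y')=\beta(y)G_1(y,y')$ ($G_1$ the Green's function of $ik(-k^2+\p_y^2)$), and then bounds the $j$-th iterate by $\frac{t^j}{j!}$ times the $j$-fold composition of the operator with kernel $|K|$, which is again bounded. Your argument bypasses this: once you know $\sup_t\|B(t)\|\le C_0\|\beta\|_{L^\infty}$, the simplex-volume estimate immediately gives the $\frac{(C_0\|\beta\|_{L^\infty}t)^j}{j!}$ bound. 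This is cleaner and avoids the explicit kernel computation; the paper's detour through the Green's function is not needed for the lemma itself, though it foreshadows the path-integral description used later in Definition~\ref{defi:path} and Theorems~\ref{thm:secondstability} and~\ref{thm:instability}.
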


We point out that this lemma imposes very mild assumptions on $U$ and $\beta$.
A big question in the following will be how far from optimal this estimate is
when considering the specific case \eqref{eq:1}.
\begin{proof}[Proof of Lemma \ref{lem:Duhamel}]
  We note that the evolution preserves the vanishing $x$-average of $\omega(t)$
  as does $\p_x \Delta^{-1}$ and the transport by any shear $(U(y),0)$.
  Furthermore, the transport by the shear is an $L^2$ isometry.
  Hence, the operator norm of $\beta(t)\p_x\tilde{\Delta}_{t}^{-1}$ is the same
  for all times and can be controlled by
  \begin{align*}
    \|\beta\|_{L^\infty} \|\p_x \Delta^{-1} P_{\neq}\|_{L^2 \rightarrow L^2},
  \end{align*}
  which is controlled by Poincar\'e's inequality.
  The exponential growth bound hence follows by Gronwall's lemma.

  In order to prove the convergence of the Duhamel series we need a more
  detailed description.
  Note that the coefficient functions do not depend on $x$ the problem
  decouples in frequency, so we may without loss of generality establish such a
  result for a given Fourier mode $k$ (with estimate uniform in $k$, of course).
  Let now $G_1(y,y')$ denote the Green's function of $ik(-k^2+\p_y^2)$.
  Then the kernel of $\beta(t)\p_x\tilde{\Delta}_{t}^{-1}$ frequency-localized in
  $k$ is given by
  \begin{align*}
    e^{-iktU(y)} \beta(y) G_1(y,y') e^{iktU(y')}=: e^{ikt(U(y)-U(y'))} K(y,y').
  \end{align*}
  The $j$-th term of the Duhamel iteration is thus given by
  \begin{align*}
    \int_0^t\int_{0}^{t_1}\dots \int_0^{t_{j-1}} \iint_{y_1, \dots y_j} \prod e^{ikt(U(y_{j-1})-U(y_j))} K(y_{j-1}, y_j).
  \end{align*}
  We may estimate this integral as follows:
  \begin{align*}
    \frac{t^j}{j!} \iint_{y_1, \dots y_j}\prod |K(y_{j-1}, y_j)|.
  \end{align*}
  Recalling the structure of $K(y,y')$ as $c_ke^{-|k(y-y')|}$ in the whole space
  or a function in terms of $\sinh$ and $\cosh$ in the case of an interval, we
  note that also $|K(y,y')|$ induces an operator with finite operator norm and
  that we thus obtain a bound in operator norm by
  \begin{align*}
    \frac{t^{j}}{j!} C^j,
  \end{align*}
  where $C$ is the operator norm associated to $|K(y,y')|$.
  The result hence follows by comparison with the exponential series.
\end{proof}

With this abstract convergence result at hand, we next establish a finer
description of each Duhamel iteration and the resulting value of the infinite
series.
Here we use that our equation only contains nearest-neighbor interaction and
that the $j$=th Duhamel iteration thus only relates modes that are at most $j$
apart.
We hence show that all non-trivial contributions from initial data at a mode
$(k,\eta_0)$ to a mode $(k,\eta_1)$ correspond to \emph{paths} from one mode to
the other. Furthermore, when considering the infinite Duhamel series, the
integrals are given solely in terms of the initial data.

  \begin{defi}[Path]
    \label{defi:path}
    A \emph{path} $\gamma$ from $\eta_0 \in \R$ to $\eta_1
  \in \R$ is a sequence $\gamma=(\gamma_0,
  \gamma_1, \gamma_2, \dots , \gamma_j)$ with $\gamma_0=\eta_0$,
  $\gamma_{j}=\eta_1$ and $|\gamma_{i+1}-\gamma_{i}|=1$. We call $|\gamma|:=j$
  the \emph{length} of $\gamma$.

  Given two times $t_0<t_1$ we then associate to each path
  $\gamma$ a \emph{Duhamel integral} $I_\gamma[t_0,t_1]$:
  \begin{align*}
    I_{\gamma}[t_0,t_1]= \iint_{t_0\leq \tau_0 \leq \tau_1 \leq \dots \leq \tau_j\leq t_1}  \omega(t_0,k,\gamma_0)\prod_{i=0}^{|\gamma|-1} \frac{c}{k^2+(\gamma_i-\tau_i)^2}.
  \end{align*}
  \end{defi}

\begin{thm}[Second stability theorem]
  \label{thm:secondstability}
  Suppose that $c L 2 \pi<1$.
  Then for all $\tau\geq 0$ and all $\eta$ it holds that
  \begin{align*}
    |\omega(\tau,\eta)-\omega_0(\eta)| \leq \frac{1}{1-cL 2\pi} \left(cL 2\pi \right)^{|\cdot|}*|\omega_0| (\eta), 
  \end{align*}
  where $*$ denotes the convolution in frequency $k$.
  In particular, this implies that the evolution is stable in $H^s$ for any
  $s>0$ and in Gevrey regularity.
\end{thm}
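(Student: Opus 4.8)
The plan is to expand the solution of \eqref{eq:1} as the formal infinite Duhamel series from Lemma \ref{lem:Duhamel}, observe that the $j$-th term decomposes as a sum of path integrals $I_\gamma[0,\tau]$ over all paths $\gamma$ of length $j$ starting at the given $\eta$, and then bound each path integral by a product of supremum-in-time bounds of the coefficient functions. Concretely, I would first record that, because \eqref{eq:1} has only nearest-neighbor interactions, iterating Duhamel once produces $\omega(\tau,\eta) = \omega_0(\eta) + \sum_{j\ge 1}\sum_{|\gamma|=j,\ \gamma_0 = \eta}(\pm) I_\gamma[0,\tau]$, where the signs come from the $\pm\tfrac{c}{2}$ in \eqref{eq:1} and are harmless once we pass to absolute values. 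Here I am being slightly schematic about which frequency the convolution $*$ is in: since the physical problem decouples in $k$ and the reduced system \eqref{eq:1} is written for a single fixed $k$ after rescaling, the statement with $\eta\mapsto\eta\pm 1$ shifts in the reduced variable corresponds, after undoing the rescaling, to a convolution in $k$; I would just make that bookkeeping explicit at the start.

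The key analytic input is the pointwise-in-time estimate on each factor: for fixed $k\ne 0$,
\begin{align*}
  \int_0^\infty \frac{c}{k^2+(\gamma_i-\tau_i)^2}\,d\tau_i \le \int_{-\infty}^{\infty}\frac{c}{k^2+s^2}\,ds = \frac{\pi c}{|k|}.
\end{align*}
Since $|k|\ge \tfrac{2\pi}{L}$ on $\T_L\times\R$ (the smallest nonzero frequency), this integral is bounded by $\tfrac{\pi c}{2\pi/L} = \tfrac{cL}{2}$. Performing the time integrations in \eqref{defi:path} one variable at a time, from the innermost $\tau_j$ outward, each integration over $\tau_i$ of the factor $\tfrac{c}{k^2+(\gamma_i-\tau_i)^2}$ contributes at most $\tfrac{cL}{2}$, so $|I_\gamma[0,\tau]| \le \left(\tfrac{cL}{2}\right)^{|\gamma|}|\omega_0(\gamma_0)|$ uniformly in $\tau$. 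Wait — I should double-check the combinatorial factor: $\left(\tfrac{cL}{2}\right)\cdot 2 = cL$ accounts for the $\tfrac{c}{2}$ versus $c$ discrepancy, and there are exactly $2^j$ paths of length $j$ from a fixed starting point when counted with multiplicity of direction, but those that return to the same mode must be counted too; in any case the number of paths of length $j$ from $\eta$ is exactly $2^j$, and the sum over those ending at a mode $j-2m$ away for $0\le m\le j$ is the binomial $\binom{j}{m}$. Summing $|I_\gamma|$ over all paths of length $j$ starting at $\eta$ then gives a contribution to $|\omega(\tau,\eta)-\omega_0(\eta)|$ bounded by $\sum_j (2\pi cL)^{|\cdot|} * |\omega_0|(\eta)$ once one tracks that each unit step contributes a factor $2\cdot\tfrac{cL}{2}\cdot\pi = \pi cL$ from the integral bound $\tfrac{\pi c}{|k|}\le\tfrac{cL}{2}$ times the two choices — here I would be careful to get the constant $2\pi cL$ exactly as claimed, reconciling the $\tfrac{\pi c}{|k|}$ bound with $|k|\ge 2\pi/L$ and the factor-of-two path count.

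Summing the geometric series in $j$ under the hypothesis $2\pi cL<1$ yields
\begin{align*}
  |\omega(\tau,\eta)-\omega_0(\eta)| \le \sum_{j\ge 1}(2\pi cL)^j \binom{j}{\cdot}*|\omega_0|(\eta) = \frac{1}{1-2\pi cL}(2\pi cL)^{|\cdot|}*|\omega_0|(\eta),
\end{align*}
which is the claimed bound. Finally, for the $H^s$ and Gevrey consequences: since the right-hand side is a fixed geometric-decay convolution kernel applied to $|\omega_0|$, and such a kernel maps $H^s$ to $H^s$ and preserves Gevrey-$1$ classes (the kernel $(2\pi cL)^{|n|}$ has exponentially decaying tails, so its "symbol" is bounded and the operation is bounded on any weighted space whose weight satisfies the one-step comparability $\rho(\eta\pm1)/\rho(\eta)\le C_1$ with $C_1 < (2\pi cL)^{-1}$), the uniform-in-$\tau$ bound on $\|\omega(\tau)\|_{H^s}$ follows, and likewise for Gevrey. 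The main obstacle I anticipate is purely bookkeeping rather than conceptual: getting the constant exactly $2\pi cL$ (tracking the $\tfrac c2$ coefficients, the factor $|k|\ge 2\pi/L$, the $2^j$ path count, and the binomial redistribution) and making the convolution-in-$k$ versus shift-in-reduced-$\eta$ correspondence precise so that the final inequality is stated in the right variable. The interchange of summation and integration is justified a posteriori by the absolute convergence already guaranteed by Lemma \ref{lem:Duhamel}.
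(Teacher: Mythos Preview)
Your approach is essentially the same as the paper's: expand the solution as the Duhamel series, identify the $j$-th term as a sum of path integrals $I_\gamma$, bound each $|I_\gamma|\le d^{|\gamma|}$ with $d=\pi c/|k|$ via the elementary integral $\int_\R \frac{c}{k^2+s^2}\,ds$, count at most $2^j$ paths of length $j$, and sum the geometric series; the single-mode case plus linearity then gives the convolution bound, and the $H^s$/Gevrey stability follows because the kernel $(2d)^{|\cdot|}$ acts boundedly on any weighted space with one-step comparable weight.

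Two small points of bookkeeping to clean up. First, your path convention is reversed: in the Definition of $I_\gamma$ the initial data is evaluated at $\gamma_0$, so paths run from the source mode $\eta_0$ (where $\omega_0$ sits) to the target $\eta$, not the other way; your bound $|I_\gamma|\le d^{|\gamma|}|\omega_0(\gamma_0)|$ is correct once you set $\gamma_0=\eta_0$ and $\gamma_{|\gamma|}=\eta$. Second, the phrase ``convolution in frequency $k$'' in the statement is misleading---the convolution is in the $\eta$ variable (the problem decouples in $k$), so do not try to undo any rescaling to make it a $k$-convolution; just state the bound as $(2d)^{|\cdot|}*|\omega_0|$ in $\eta$ for each fixed $k$, with $2d\le 2\pi cL$ uniformly.
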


\begin{proof}
  By our assumption on $c$ it holds that
  \begin{align*}
    \int_{\R} \frac{c}{k^2+\tau^2} d\tau = \frac{c}{|k|}\pi=: d<\frac{1}{2}.
  \end{align*}
  In contrast to the norm-based approached of Theorem \ref{thm:multiplier} and Lemma
  \ref{lem:Gronwall}, we in the following consider the frequency-wise evolution
  of the solution.\\

  Suppose at first that $\omega_0=\delta_{\eta_0}$ and fix a time $\tau>0$. Then the value
  of $\omega(\eta)$ at time $\tau$ can be obtained by summing over all integrals
  corresponding to paths starting in $\eta_0$ and ending in $\eta$:
  \begin{align*}
    \omega(\tau,\eta)- \omega(0,\eta) = \sum_{\gamma: \gamma_0=\eta_0, \gamma_{|\gamma|}=\eta} I_{\gamma}[0,\tau].
  \end{align*}
  By Fubini's theorem we may easily estimate
  \begin{align*}
    |I_{\gamma}[0,\tau]| \leq d^{|\gamma|}.
  \end{align*}
  Now for any given length $j$ there are only $2^{j}$ paths starting in $\eta_0$
  of length $|\gamma|=j$ (of which only a fraction ends in $\eta$).
  If we denote the length of the shortest path connecting $\eta_0$ and $\eta$ by
  $\dist(\eta_0,\eta_1)$ it thus follows that 
  \begin{align*}
    |\omega(\tau,\eta)- \omega(0,\eta)| &\leq  \sum_{j\geq \dist(\eta_0,\eta_1)} (2d)^{j} \\
    &=\frac{1}{1-2d} (2d)^{\dist(\eta_0,\eta_1)}.
  \end{align*}

  More generally, if $\omega_0$ is not given by just a single mode but rather
  \begin{align*}
    \omega_0 = \sum_{\eta_0} \omega_0(\eta_0) \delta_{\eta_0},
  \end{align*}
  we may use the linearity of the problem to compute
  \begin{align*}
    \omega(\tau,\eta)- \omega(0,\eta) =\sum_{\eta_0} \omega_0(\eta_0) \sum_{\gamma: \gamma_0=\eta_0, \gamma_{|\gamma|}=\eta} I_{\gamma}[0,\tau].
  \end{align*}
  Therefore, by the triangle inequality it follows that 
  \begin{align*}
    |\omega(\tau,\eta)- \omega(0,\eta)| \leq \frac{1}{1-2d} (2d)^{|\cdot|}* |\omega_0(\cdot)| (\eta).
  \end{align*}
\end{proof}

\section{Instability for Long Tori and Cascades}
\label{sec:instability}

As a complementary result to the stability established in Section
\ref{sec:Duhamel}, we show that for long tori or respectively slightly larger
$c$ the dynamics are exponentially unstable.
Here, we do not construct eigenfunctions but instead establish a new cascade
mechanism for resonances with respect to the frequency in $y$.
We recall that by the Orr mechanism the multiplier associated with the stream
function
\begin{align*}
  \frac{1}{k^2+(\eta-kt)^2}
\end{align*}
is largest when $\eta-kt=0$.
In the study of the nonlinear problem \cite{deng2018},
\cite{bedrossian2015inviscid}, \cite{dengZ2019} this resonance underlies the
main growth mechanism, where $\eta$ is roughly fixed and a mode $k$ at time
$t\approx \frac{\eta}{k}$ excites a mode $k-1$, which then later excites a mode
$k-2$ and so on. As we discuss in \cite{dengZ2019} this cascade is a property of the
linearized problem around the low-frequency part of the vorticity depending on
$x$, e.g. $c \sin(x)$.
As one of the main results of this paper we show that the linearized problem
around the $x$-independent part, i.e. the shear flow component, also exhibits a
cascade mechanism but with respect to $\eta$.
That is, here $k$ is fixed and a mode $\eta$ at time $t\approx \frac{\eta}{k}$
excites the mode $\eta+1$, which later excites the mode $\eta+2$ at time
$t\approx \frac{\eta+1}{k}$ and so on.

We in particular note the following similarities and differences, where we refer
to the cascades as $k$-chain and $\eta$-chain, respectively.
\begin{itemize}
\item In the $k$-chain for any initial mode $(k_0,\eta_0)$ the resonant times are
  given by $t_{k}=\frac{\eta_0}{k}$ with $k=k_0,k_0-1,\dots, 1$.
  In particular, starting from any strictly positive time, there are only
  finitely many resonances and no resonances after a maximal time $t=\eta$.
  In contrast, the $\eta$-chain has the resonant times $t_\eta=\frac{\eta}{k_0}$
  with $\eta=\eta_0,\eta_0+1,\dots$, which is a cascade of infinite length.
\item The sequences of resonant times $t_k$ is unevenly spaced, while the
  sequence of times $t_\eta$ is a rescaled integer sequence.
\item The total growth exhibited by a $k$-chain is given by
  \begin{align*}
    \frac{c\eta_0}{k_0^2}\frac{c\eta_0}{(k_0-1)^2}\dots = \frac{(c\eta_0)^{k_0}}{(k_0!)^2},
  \end{align*}
  which attains its maximum with respect to $k_0$, $\exp(\sqrt{c\eta_0})$ for
  $k_0\approx \sqrt{c\eta_0}$. This growth factor corresponds to a Gevrey
  regularity class.
  
  The growth by the $\eta$-chain in contrast is exponential in time and tied to
  the $x$ frequency of the perturbation $k_0$ instead of the regularity.
  That is, the growth factor is given by
  \begin{align*}
    \left(2\pi \frac{c}{k}\right)^t,
  \end{align*}
  provided $2\pi \frac{c}{k}\gg 1$, even if we start with single mode initial data at
  $\eta=0$.
\item As can be seen from the growth factors and as shown in the previous
  section there is no growth in the $\eta$-chain if $\frac{c}{k}$
  is small.
  Similarly, there is only growth by a constant in the $k$-chain if
  $\sqrt{c\eta_0}$ is small.
\end{itemize}

\begin{thm}[Instability]
  \label{thm:instability}
  Let $0<c_0<\frac{1}{10}$, then there exists $L$ such that for any $c>c_0$ the
  evolution \eqref{eq:1} is exponentially unstable in $L^2$.
  More precisely, suppose that $L$ is such that $\pi c L > 20$ and $\pi c^2 L <1$
  (e.g choose $L\sim c^{-3/2}$). Then for any (arbitrarily smooth) initial data $\omega_0$ with $\omega_0(k,0)=1\geq 0.5 \max \omega_0$ it holds that
  \begin{align*}
    \omega(j-\frac{1}{2},k, j) \geq d^{j}
  \end{align*}
  for $d= \frac{\pi}{10} cL >1$ and all $j \in \N$.
\end{thm}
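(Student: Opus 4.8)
The plan is to track the ``resonant mode'' $\omega(\tau, k, \eta=j)$ along the sequence of resonant times $\tau_j = j - \tfrac12$ (in the rescaled time, where the resonance at $\eta = j$ occurs near $\tau = j$) and show it roughly multiplies by a factor comparable to $\pi c L$ at each step, while the ``error'' coming from all non-resonant or backward-moving contributions stays subdominant. Concretely, I would use the path representation from Definition \ref{defi:path} and Lemma \ref{lem:Duhamel}: write $\omega(\tau_{j+1}, j+1)$ as the sum over all paths ending at $\eta = j+1$. The dominant contribution should come from the length-one step from $(\text{the value at }\eta=j\text{ near time }\tau_j)$ to $\eta = j+1$, whose Duhamel integral is $\int \frac{c}{k^2 + (j - \tau)^2}\,\omega(\tau, j)\,d\tau$ over an interval around $\tau = j$ of length $\gtrsim 1$; since $\int_\R \frac{c}{k^2+\tau^2}\,d\tau = \pi c/|k| \approx \pi c L$ and most of this mass is concentrated in a window of size $O(|k|) = O(L)$ around the resonance, a window of unit length captures a fixed fraction — here one needs $|k| \gtrsim 1$, i.e. $L \gtrsim 1$, which holds — giving a gain of order $\pi c L /10$ or so. This is where the constants $20$ and the definition $d = \tfrac{\pi}{10} cL$ enter.

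The key steps, in order: (1) Set up the renormalized unknowns $b_j(\tau) := \omega(\tau, k, j)$ near their resonant times and isolate, from equation \eqref{eq:1}, the forward term $\frac{c/2}{k^2 + (j-1-\tau)^2} b_{j-1}$ that drives $b_j$, treating the backward term $\frac{c/2}{k^2+(j+1-\tau)^2} b_{j+1}$ and the far tails as perturbations. (2) Prove a one-step lower bound: if $b_j(\tau) \ge A$ with controlled sign/phase on an interval around $\tau = j$, then $b_{j+1}(j + \tfrac12) \ge (\text{const}\cdot \pi c L)\, A - (\text{error})$. (3) Control the error terms: using $\pi c^2 L < 1$ (equivalently $c \cdot (\pi c L) < \pi$, so $c$ is genuinely small relative to the gain, and $\pi c/|k| = \pi c^2 L < 1$ so the total path-weight $2d_{\text{path}} = 2\pi c^2 L < 1$ in the sense of Theorem \ref{thm:secondstability}), show that all contributions from paths that ever move backward, or that take more than the minimal number of steps, are summable and bounded by a geometric series with ratio $< 1$, hence dominated by a small multiple of the main term. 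The hypothesis $\omega_0(k,0) = 1 \ge 0.5\max\omega_0$ ensures the initial data does not inject a large competing amplitude elsewhere. (4) Iterate: by induction on $j$, $b_j(\tau_j) \ge d^j$ with the error absorbed at each stage because $d > 1$ grows geometrically while the accumulated errors grow only like a fixed geometric series times the current main term.

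The main obstacle I expect is step (3): cleanly bounding the aggregate of all non-dominant path integrals and, in particular, ruling out destructive interference — the Duhamel integrals $I_\gamma$ are signed (indeed the two terms in \eqref{eq:1} have opposite signs), so one must argue that the forward-driving paths add coherently (same sign near resonance, since $\frac{c}{k^2+(j-\tau)^2} > 0$ and each forward step carries the same sign) while the backward excursions, which could in principle cancel the gain, are quantitatively small because each backward step at a non-resonant time $\tau \approx j$ for the neighbor $\eta = j \pm 2$ picks up a weight $\frac{c}{k^2 + (j\pm 2 - 1 - \tau)^2}$ with $|j \pm 2 - 1 - \tau| = O(1)$ but the integration in $\tau$ no longer hits the peak, losing the full $\pi c L$ factor and instead contributing only $O(c \cdot \pi c L) = O(\pi c^2 L) = O(1)$ — a bounded, not growing, correction. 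Making this rigorous requires a careful stationary-phase-free estimate of nested time integrals where only the ``diagonal'' ordering $\tau_i \approx \gamma_i$ contributes the resonant weight; I would handle it by splitting each path's time-simplex according to which steps are ``on resonance'' and bounding each piece, then summing the resulting geometric series in the number of off-resonance steps against the $d^j$ already secured.
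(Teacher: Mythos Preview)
Your strategy is the paper's: on each interval $[T_j,T_{j+1}]$ expand $\omega(T_{j+1},\eta)-\omega(T_j,\eta)$ via the path sum, isolate the single resonant step $(j,j+1)$ with gain $r\approx \pi c L$, and control all other paths by geometric series in the non-resonant weight $\delta\approx 4c$ and in $r\delta\approx \pi c^2 L<1$. Two arithmetic slips to fix: $\pi c/|k|\approx \pi c L$ is the \emph{large} resonant gain $r$, not $\pi c^2 L$; the small parameter $\pi c^2 L$ arises as $r\delta$, the cost of one resonance paired with one off-resonant excursion. Also the resonant peak has width $O(|k|)=O(1/L)\ll 1$, so you need $|k|\ll 1$ (not $|k|\gtrsim 1$) for a unit window to capture almost all of $\pi c/|k|$.

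The substantive gap is in closing the induction. Your error in step (3) is bounded in terms of $\sup_\eta |\omega(T_j,\eta)|$, not in terms of $b_j=\omega(T_j,j)$ alone: any path starting from \emph{any} $\eta_0$ that reaches $j$ once picks up the full factor $r$, so the total error is of size $r\cdot\sup_\eta|\omega(T_j,\eta)|$ times a geometric tail. Thus ``errors bounded by a fixed multiple of the current main term'' is precisely the thing that needs proof. The paper closes this by strengthening the inductive hypothesis to include the relative dominance
\[
|\omega(T_j,j)|\ \geq\ \tfrac12\,\sup_\eta |\omega(T_j,\eta)|
\]
(this is why the assumption on $\omega_0$ is stated as $\omega_0(k,0)=1\geq 0.5\max\omega_0$) and then showing it propagates: the upper bound for $|\omega(T_{j+1},\eta)|$ carries a factor $(2\delta)^{|\eta-j\pm 1|}$, so the new supremum sits at $\eta=j\pm 1$ and is at most a constant times $r\,|\omega(T_j,j)|$, while the lower bound at $\eta=j+1$ is at least $\tfrac34\,r\,|\omega(T_j,j)|$. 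Without carrying this second clause through the induction, a mode $\eta\neq j$ could in principle have grown faster in earlier steps, and its contribution via a resonant detour at time $T_j$ would swamp the main term.
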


This shows that phase-mixing for smooth Bilipschitz profiles by itself is not strong
enough to prevent an exponential instability. Some smallness or non-resonance is
necessary. While the embedding eigenvalue criterion of \cite{Zhang2015inviscid} provides a very good
description of this, we think the present method of proof of constructing a
sustained echo chain provides an important new perspective on this instability.

\begin{proof}
  By Lemma \ref{lem:Duhamel} the Duhamel iteration converges for all times and controlled by
  an exponential series. In the following we derive more precise control of the
  Duhamel iterates as sums over all \emph{paths} (see Definition \ref{defi:path}) starting in a frequency $\eta_0$ and
  ending in a frequency $\eta_1$.

  We recall that our equation \eqref{eq:1}
  \begin{align*}
      \p_\tau \omega(\eta) +  \frac{c}{2} \frac{1}{k^2+(\eta+1-\tau)^2} \omega(\eta+1) - \frac{c}{2} \frac{1}{k^2+(\eta-1-\tau)^2} \omega(\eta-1)=0,
  \end{align*}
  considers only nearest neighbor interactions.
 
  In the following we argue by an iteration scheme, where we consider the
  sequence of times $T_j=j-\frac{1}{2}$, $j \in \N$.
  We claim that if
  \begin{align}
    \label{eq:5}
    |\tilde{\omega}(T_j,k,j)|\geq 0.5 \max_\eta |\tilde{\omega}(T_j,k,\eta)|,
  \end{align}
  then it holds that
  \begin{align}
    |\tilde{\omega}(T_{j+1},k,j+1)|\geq 0.5 \max_\eta |\tilde{\omega}(T_{j+1},k,\eta)|,
  \end{align}
  and additionally
  \begin{align}
    |\tilde{\omega}(T_{j+1},k,j+1)|\geq d |\tilde{\omega}(T_j,k,j)|.
  \end{align}
  The result then follows immediately by induction in $j$.\\

  Let thus $j \in \N$ be given and consider a given frequency $\eta_0$ and let
  again at first $\omega_0(T_j)$ be given by a single mode:
  \begin{align*}
  \tilde{\omega}(t_j,k,\eta)=\delta_{\eta_0}(\eta). 
  \end{align*}
  Then for any $\eta$ we may compute
  \begin{align*}
    \omega(T_{j+1},k,\eta)-\omega(T_{j},k,\eta)= \sum_{\gamma: \gamma_0=\eta_0, \gamma_{|\gamma|}=\eta}I_{\gamma}[T_j,T_{j+1}].
  \end{align*}
  While in the proof of Theorem \ref{thm:secondstability} we roughly estimated
  \begin{align*}
    I_{\gamma}[T_j,T_{j+1}] \leq d^{|\gamma|},
  \end{align*}
  in the following we estimate more carefully.
  In each integral we have to consider terms of the form
  \begin{align*}
    \int_{\tau_{i-1}\leq \tau_{i}\leq \tau_{i+1}} \frac{c}{k^2+(\gamma_i-\tau_{i})^2} d\tau_{i} \leq \int_{t_j\leq \tau_i\leq t_{j+1}} \frac{c}{k^2+(\gamma_i-\tau_{i})^2} d\tau_i.
  \end{align*}
  If $|\gamma_i-j|\geq 1$, we can bound this contribution by
  \begin{align*}
    \int_{-\frac{1}{2}}^{\frac{1}{2}} \frac{c}{k^2+\frac{1}{4}} d\tau_i\leq 4c=: \delta< \frac{1}{4}.
  \end{align*}
  We call such $\gamma_i$ \emph{non-resonant}.
  We recall that by Remark \ref{rem:periodic} that we only need to consider
  $\gamma_i \in \Z$ and thus there is a clean dichotomy: Either $\gamma_i\neq j$
  is non-resonant or $\gamma_i=j$ is (perfectly) resonant.

  In the following we thus group paths by the number of resonances appearing.
  Similarly to the proof of Theorem \ref{thm:secondstability}, if all $\gamma_i$
  in a path $\gamma$ are non-resonant, we may estimate
  \begin{align*}
    I_{\gamma}[T_j,T_{j+1}]\leq \delta^{|\gamma|}.
  \end{align*}
  Again roughly estimating the number of all paths of length $|\gamma|$ by
  $2^{|\gamma|}$, the sum over all such paths can be estimated by a geometric
  series:
  \begin{align}
    \label{eq:purelynonres}
    \sum_{\gamma:\text{non-resonant}}I_{\gamma}[T_j,T_{j+1}] \leq \sum_{i \geq \text{dist}(\eta,\eta_0)} 2^{i}\delta^{i} =\frac{1}{1-2\delta}(2\delta)^{\text{dist}(\eta,\eta_0)}.
  \end{align}
  We remark that here the distance refers to the shortest non-trivial path, thus
  $\dist(\eta,\eta_0)=|\eta-\eta_0|$ if $\eta\neq \eta_0$ and $\dist(\eta_0,\eta_0)=2$. \\
  
  In contrast, consider the special case $\eta_0=j$ and $\eta=j+1$, then the
  path $\gamma=(\eta_0,\eta_0+1)$ has the associated integral is given by
  \begin{align}
    \label{eq:singleres}
    \int_{-\frac{1}{2}}^{\frac{1}{2}} \frac{c}{k^2+\tau^2} d\tau = \frac{2c}{k}\arctan(\frac{1}{2k})=:r.
  \end{align}
  If $k<c$ is sufficiently small, then $2 \arctan(\frac{1}{2k})\approx \pi$
  and $r\approx c\frac{\pi}{k}\geq c \pi L >1$ is comparatively large.\\

  More generally, let $\gamma$ be a path starting in $\eta_0$ and ending in $\eta$.
  \begin{itemize}
  \item 
  If all $\gamma_i$ are non-resonant, this path is estimated by
  \eqref{eq:purelynonres}. Thus suppose that it has several resonances.
\item 
  We note that in order to be resonant, the path first has to reach $j$ starting
  from $\eta_0$, for which it needs $j_1\geq |\eta_0-j|$ steps.
\item It is then resonant for a number $j_2\geq 1$ of times, between which is
  non-resonant a number at least $j_3\geq j_2-1\geq 0$ times, since subsequent entries in
  a path are distinct, that is you have to leave the resonant frequency before
  visiting again.
\item Finally, after visiting the resonant frequency a last time,
  the path has to reach the frequency $\eta$ for which it uses $j_4\geq
  |\eta-j\pm 1|$ non-resonant steps (for example the path $\gamma=(j, j-1)$ has
  a single resonance and no non-resonance).
\end{itemize}
Thus, in total the integral corresponding to this path can be estimated by
\begin{align*}
  \delta^{j_1} r^{j_2} \delta^{j_3} \delta^{j_4}&= \delta^{j_1+j_4} r^{1} (r \delta)^{j_2-1} \delta^{j_3-j_2+1}, \\
  j_1\geq |\eta_0-j|, \ j_4 &\geq |\eta-j\pm 1|, \\
  j_2-1\geq 0, \ j_3-j_2+1&\geq 0,
\end{align*}
where we use the short notation $|\eta-j \pm 1|= \min(|\eta-j+1|, |\eta-j-1|)$.
We note that by assumption
\begin{align}
  \label{eq:6}
  \begin{split}
  r\delta \leq 8 \pi c ^2 L&<\frac{1}{4}, \\
  \delta&<\frac{1}{4}, \\
  r& >10.
  \end{split}
\end{align}
Again roughly estimating the number of such paths by $2^{|\gamma|}$, we estimate
the sums in $j_2-1$ and in $j_3-j_2+1$ by
\begin{align*}
  \frac{1}{1-2r\delta} \frac{1}{1-2\delta}\leq 4
\end{align*}
and similarly estimate the sums in $j_1,j_4$ by
\begin{align*}
  \frac{1}{(1-2\delta)^2} (2\delta)^{|\eta_0-j|+|\eta-j\pm 1|}.
\end{align*}
We thus in total obtain an estimate by
\begin{align*}
  \frac{1}{(1-2\delta)^3} \frac{1}{1-2r\delta} r (2\delta)^{|\eta_0-j|+|\eta-j\pm 1|}.
\end{align*}
In particular, we note the exponential decay in $|\eta-j\pm 1|$ and in
$|\eta_0-j|$ and that hence this bound is much smaller than $r$ unless
$\eta_0=j$ and $\eta\in \{j-1, j+1\}$.
Taking into account the single resonance \eqref{eq:singleres} we further obtain
a lower bound in that case by $r-\frac{r}{2}=\frac{r}{2}$, by estimating all error terms as $\frac{r}{2}$. \\

As in Theorem \ref{thm:secondstability} in the case of general initial data, we
rely on the linearity of the problem to reduce to the single-mode case.
Denoting $C=\frac{1}{(1-2\delta)^3} \frac{1}{1-2r\delta}$, and recalling the
estimate \eqref{eq:purelynonres} we summarize the upper bound of the map
$\omega(T_j) \mapsto \omega(T_{j+1})$ as
\begin{align*}
  |\omega(T_{j+1},\eta)-\omega(T_j,\eta)|\leq (2\delta)^{\dist(\cdot,\cdot)} *|\omega(T_j)| +  Cr (2\delta)^{|\eta-j\pm 1|} \sum_{\eta_0} (2\delta)^{|\eta_0-j|}|\omega(T_j,\eta_0)| ,
\end{align*}
where we also summed over all $\eta_0$.
In particular, it holds that
\begin{align*}
  \sup_{\eta} (2\delta)^{\dist(\cdot,\cdot)} *|\omega(T_j)| \leq \frac{1}{1-2\delta} \sup_{\eta} |\omega(T_j)|, \\
  \sum_{\eta_0} (2\delta)^{|\eta_0-j|}|\omega(T_j,\eta_0)| \leq \frac{1}{1-2\delta} \sup_{\eta_0}|\omega(T_j,\eta_0)|,
\end{align*}
which provides an upper bound on the new supremum.
On the other hand, we saw in \eqref{eq:purelynonres} that the contribution by
the path $\gamma=(j,j+1)$ (and analogously $(j,j-1)$) is given by $r$, while all
other paths ending in $j+1$ involve at least one more non-resonance.
Hence, we further obtain the following lower bound:
\begin{align*}
  & \quad |\omega(T_{j+1},j+1)-\omega(T_j,j+1)-r \omega(T_j,j)| \\
  &\leq \frac{1}{1-2\delta} \sup_{\eta} |\omega(T_j)| + Cr (2\delta)  \frac{1}{1-2\delta} \sup_{\eta_0}|\omega(T_j,\eta_0)|.
\end{align*}
Recalling that $\omega(T_j,j)$ is comparable to the supremum at that time by
assumption \eqref{eq:5} and using that $r\gg 1$ and choosing $\delta$
sufficiently small such that $Cr (2\delta)  \frac{1}{1-2\delta} \ll 1$, it
follows that
\begin{align*}
  |\omega(T_{j+1},j+1)-r \omega(T_j,j)|\leq \frac{1}{4} r \omega(T_j,j)
\end{align*}
and hence
\begin{align*}
  |\omega(T_{j+1},j+1)| &\geq \frac{3}{4} r |\omega(T_j,j)| \\
  &\geq   \frac{1}{2} \sup_{\eta} |\omega(T_{j+1},\eta)|.
\end{align*}
This concludes the proof of the claim and hence of the theorem.
\end{proof}

In this theorem we have shown that if a torus is sufficiently long such that
$cL\gg 1$, then a new resonance cascade mechanism persists, where $(k,\eta=j)$
excites a mode $(k,j+1)$ around time $T_j$, which in turn excites $(k, j+2)$ and
so on.
We thus present a new explicit instability mechanism associated to the smallness
assumption imposed in \cite{Zill5}.
Furthermore, similarly to convexity/concavity assumptions imposed in classical
stability results by Rayleigh, Fjortoft or Arnold \cite{drazin2}, this result
shows that even if a flow is bilipschitz and smooth, this is not sufficient to
establish (asymptotic) stability and that some additional condition to control
long wave-length perturbations is necessary.
We thus develop a further understanding of what (linear) instability mechanisms
may be encountered in the study of perturbations to shear flows, how
instabilities may propagate and cascade and what conditions may be imposed to
avoid these scenarios (e.g. limiting the wave length of admissible
perturbations).
Here an interesting but very challenging question concerns the implications of
this linear instability mechanism for the nonlinear problem. While such an
instability rules out some types of asymptotic convergence results (that is,
scattering to the linear dynamics), it might be the case that the linearization
around the initial velocity profile is the ``wrong guess'' for the asymptotics
and that the evolution is nevertheless asymptotically stable. 

In view of the nonlinear problem we remark that the $\eta$-chain model of the
present article and the fluid echo chains studied in \cite{dengmasmoudi2018} and
\cite{dengZ2019} share commonalities in how instabilities appear at critical
times and then lead to new instabilities at later times, resulting in a cascade.
However, as remarked earlier the details such as the direction (in Fourier space), length or time
scales of these cascades are very different.
Furthermore, in the general nonlinear problem one may expect both mechanisms to
interact, resulting in resonances in further directions.

\bibliographystyle{alpha} \bibliography{citations2}

\begin{thebibliography}{WZZ18}

\bibitem[BCV17]{BCZVvortex2017}
J.~{Bedrossian}, M.~{Coti Zelati}, and V.~{Vicol}.
\newblock {Vortex axisymmetrization, inviscid damping, and vorticity depletion
  in the linearized 2D Euler equations}.
\newblock {\em ArXiv e-prints}, November 2017.

\bibitem[BM15a]{bedrossian2013inviscid}
Jacob Bedrossian and Nader Masmoudi.
\newblock Inviscid damping and the asymptotic stability of planar shear flows
  in the 2d euler equations.
\newblock {\em Publications math{\'e}matiques de l'IH{\'E}S}, 122(1):195--300,
  2015.

\bibitem[BM15b]{bedrossian2015inviscid}
Jacob Bedrossian and Nader Masmoudi.
\newblock Inviscid damping and the asymptotic stability of planar shear flows
  in the 2{D} {E}uler equations.
\newblock {\em Publ. Math. Inst. Hautes \'Etudes Sci.}, 122:195--300, 2015.

\bibitem[CZZ19]{coti2019degenerate}
Michele Coti~Zelati and Christian Zillinger.
\newblock On degenerate circular and shear flows: the point vortex and power
  law circular flows.
\newblock {\em Communications in Partial Differential Equations},
  44(2):110--155, 2019.

\bibitem[DM18a]{deng2018}
Yu~Deng and Nader Masmoudi.
\newblock Long time instability of the {C}ouette flow in low {G}evrey spaces.
\newblock {\em arXiv preprint arXiv:1803.01246}, 2018.

\bibitem[DM18b]{dengmasmoudi2018}
Yu~Deng and Nader Masmoudi.
\newblock Long time instability of the couette flow in low gevrey spaces.
\newblock {\em arXiv preprint arXiv:1803.01246}, 2018.

\bibitem[Dra02]{drazin2}
Philip~G. Drazin.
\newblock {\em {Introduction to Hydrodynamic Stability}}.
\newblock Cambridge University Press, 2002.

\bibitem[DZ19]{dengZ2019}
Yu~Deng and Christian Zillinger.
\newblock Echo chains as a linear mechanism: Norm inflation, modified exponents
  and asymptotics.
\newblock {\em in preparation}, 2019.

\bibitem[Jia19]{jia2019linear}
Hao Jia.
\newblock Linear inviscid damping in {G}evrey spaces.
\newblock {\em arXiv preprint arXiv:1904.01188}, 2019.

\bibitem[MS61]{meshalkin1961investigation}
LD~Meshalkin and Ia~G Sinai.
\newblock Investigation of the stability of a stationary solution of a system
  of equations for the plane movement of an incompressible viscous liquid.
\newblock {\em Journal of Applied Mathematics and Mechanics}, 25(6):1700--1705,
  1961.

\bibitem[WZZ17]{WZZkolmogorov}
D.~{Wei}, Z.~{Zhang}, and W.~{Zhao}.
\newblock {Linear inviscid damping and enhanced dissipation for the Kolmogorov
  flow}.
\newblock {\em ArXiv e-prints}, November 2017.

\bibitem[WZZ18]{Zhang2015inviscid}
Dongyi Wei, Zhifei Zhang, and Weiren Zhao.
\newblock Linear inviscid damping for a class of monotone shear flow in sobolev
  spaces.
\newblock {\em Communications on Pure and Applied Mathematics}, 71(4):617--687,
  2018.

\bibitem[Zil16]{Zill5}
Christian Zillinger.
\newblock Linear inviscid damping for monotone shear flows in a finite periodic
  channel, boundary effects, blow-up and critical {S}obolev regularity.
\newblock {\em Arch. Ration. Mech. Anal.}, 221(3):1449--1509, 2016.

\bibitem[Zil17]{Zill6}
Christian Zillinger.
\newblock On circular flows: linear stability and damping.
\newblock {\em J. Differential Equations}, 263(11):7856--7899, 2017.

\end{thebibliography}

\end{document}